\theoremstyle{plain}
\newtheorem{thm}{Theorem}
\newtheorem{lem}{Lemma}[section]
\newtheorem{prop}[lem]{Proposition}
\newtheorem{defn}[lem]{Definition}
\theoremstyle{definition}
\newtheorem{rem}[lem]{Remark}
\newtheorem{ex}[lem]{Example}
\newcommand{\Z}{\mathbb{Z}}
\newcommand{\C}{\mathbb{C}}
\newcommand{\G}{\mathcal{G}}
\newcommand{\Qc}{\mathcal{Q}} 
\newcommand{\Rc}{\mathcal{R}}
\newcommand{\SL}{\mathrm{SL}}
\newcommand{\OSp}{\mathrm{OSp}}
\newcommand{\SSL}{\mathrm{SL}}
\def\a{\alpha}
\def\b{\beta}
\def\d{\delta}
\def\g{\gamma}
\def\om{\omega}
\begin{document}

\title{Cluster algebras with Grassmann variables}

\author{Valentin Ovsienko}

\address{
Valentin Ovsienko,
CNRS,
Laboratoire de Math\'ematiques 
U.F.R. Sciences Exactes et Naturelles 
Moulin de la Housse - BP 1039 
51687 REIMS cedex 2,
France}
\email{valentin.ovsienko@univ-reims.fr}

\author{Michael Shapiro}

\address{Michael Shapiro,
Department of Mathematics, 
Michigan State University, 
East Lansing, MI 48824,
USA}
\email{machace5@math.msu.edu}


\begin{abstract}
We develop a version of cluster algebra extending the ring of Laurent polynomials
by adding Grassmann variables.
These algebras can be described in terms of ``extended quivers''
which are oriented hypergraphs.
We describe mutations of such objects and define a corresponding commutative superalgebra.
Our construction includes the notion of weighted quivers that has already appeared in different contexts.
This paper is a step towards understanding the notion of cluster superalgebra.
\end{abstract}

\maketitle


\section*{Introduction}

Cluster algebras, discovered by Fomin and Zelevinsky~\cite{FZ1}, 
are a special class of commutative associative algebras.
It was proved by many authors that the coordinate rings of many algebraic varieties
arising in the Lie theory, such as Lie groups of matrices, Grassmannians,
various moduli spaces, etc. have structures of cluster algebra; for a survey, see~\cite{Wil}.
Cluster algebras naturally appear in algebra, geometry and combinatorics, they are also closely related to
integrable systems.

A cluster algebra is a subalgebra of the algebra of
{\it Laurent polynomials} in
$\Z[x^{\pm1}_1,\ldots,x^{\pm1}_n]$
generated by certain polynomials with positive integer coefficients.
A cluster algebra is usually defined with the help of a quiver (an oriented graph)
with no loops and no $2$-cycles;
the generators of the algebra are defined with the help of
{\it exchange relations} and {\it quiver mutations}.

Our goal is to introduce a version of cluster algebra
with nilpotent (Grassmann) variables
$\{\xi_1,\ldots,\xi_m\}$, that anticommute with each other,
and in particular, square to zero.
The algebras we consider are certain subalgebras of the ring
$
\C[x^{\pm1}_1,\ldots,x^{\pm1}_n,\xi_1,\ldots,\xi_m],
$
that are Laurent polynomials in $x_1,\ldots,x_n$.
Unfortunately, we can only treat mutations of even variables
leaving the odd ones frozen.
In this sense, the correct notion of cluster superalgebra is still out of reach.
We believe that the correct notion of mutations of odd variables should extend the
coordinate transformations considered in~\cite{MOT} and~\cite{PZ,IPZ}.

We consider a notion of ``extended quiver'' which is a {\it hypergraph} extending
 a classical quiver.
The main ingredients are modified exchange relations and quiver mutations.
The vertices of the classical quiver are labeled by the even variables,
the new vertices are labeled by the Grassmann variables (diamonds, in the Figure below).
Essentially, the mutations of an extended quiver
can be described by the following diagrams:
$$
\xymatrix{
&{\color{red}\diamond}\ar@{<-}[d]&\\
\bullet&\star\ar@{<-}[l]&\bullet\ar@{<-}[l]
}
\xymatrix{\\
\Longrightarrow }
 \xymatrix{
&{\color{red}\diamond}\ar@{->}[d]\ar@<1pt>@{<-}[rd]&
\\
\bullet\ar@/^-0.7pc/[rr]&\star\ar@{->}[l]&\bullet\ar@{->}[l]
}
\qquad\qquad
\xymatrix{
&{\color{red}\diamond}\ar@{->}[d]&\\
\bullet&\star\ar@{<-}[l]&\bullet\ar@{<-}[l]
}
\xymatrix{\\
\Longrightarrow}
 \xymatrix{
&{\color{red}\diamond}\ar@{<-}[d]\ar@<1pt>@{->}[rd]&
\\
\bullet\ar@/^-0.7pc/[rr]&\star\ar@{->}[l]&\bullet\ar@{->}[l]
}
$$
The ``underlying quiver'', with vertices shown as black bullets mutates in a standard way.
The additional vertices denoted by red diamonds that represent a group of Grassmann variables connected to a standard (even) vertex,
behave in a way quite different from the standard mutation rules.
Quite remarkably, the above mutation rule includes the notion of {\it weighted quivers} (see~\cite{OT} and references therein)
which is in a sense dual to the Bernstein-Gelfand-Ponomarev functor.
We will explain how to reduce the above mutation rules to the classical ones combined with a
transformation of quivers that we call a ``monomial transform''.

This paper is based on the unpublished preprint~\cite{Ovs}, however
we modify the exchange relations suggested by~\cite{Ovs} in such a way
the restrictions on quiver mutations disappear.
The main motivation for our construction is the idea to develop
a complete notion of {\it cluster superalgebra}.
One concrete example of our general construction is the notions of
{\it superfriezes} considered in~\cite{MOT}.
Let us also mention another attempt to develop the notion of cluster superalgebra~\cite{LMRS}
which is quite different from our construction.
In particular, the exchange relations of~\cite{LMRS}, similarly to~\cite{Ovs},
 are at most quadratic in odd variables.

We would like to pay attention to the fact that unlike the present paper, \cite{LMRS} 
contains expressions for mutations of odd variables. However, in our opinioin the expressions 
in \cite{LMRS} has some flaws the most evident of which is that
the transcendence degree of the cluster algebra is generally speaking not mutationally invariant.

\section{Extended quivers and their mutations}\label{DeFCluSS}

We introduce extended quivers, and describe their mutation rules.
It turns out that an extended quiver is not a graph but
an oriented {\it hypergraph}.
More precisely, given a quiver $\Qc$,
we add new, odd (or colored), vertices, and complete
the set of edges of $\Qc$ by adding
some $2$-paths joining three vertices.

The reason for this notion is the general idea of superalgebra and supergeometry,
that supersymmetric version of every object should be understood as its ``square root''.
The notion of extended quiver is an attempt to apply this idea in combinatorics:
a square root of an edge in a graph is understood as a $2$-path joining two
odd vertices through an even vertex.

\subsection{Introducing extended quivers}\label{IntroQuiS}

Let us recall that a {\it hypergraph}  is a generalization of a graph in the following sense.
A hypergraph~$\G$ is a pair~$(\G_0,\G_1)$, where~$\G_0$ is the set of vertices and
$\G_1$ is a set of subsets of~$\G_0$, instead of the set of edges as for a usual graph.
We will be considering oriented hypergraphs for which $\G_1$ is a set of arrows completed by a set
of oriented $2$-path.

\begin{defn}
\label{AllGraphDef}
Given a quiver $\Qc$  with no loops and no $2$-cycles,
an {\it extended quiver} $\widetilde{\Qc}$ with {\it underlying} quiver $\Qc$, 
is an oriented hypergraph defined as follows.
\begin{enumerate}
\item[{\bf (A)}]
The vertices of $\Qc$ are labeled by $\{x_1,\ldots,x_n\}$,
$\widetilde{\Qc}$ has $m$ extra ``colored'' vertices labeled by the odd variables 
$\{\xi_1,\ldots,\xi_m\}$, so that
$$
\widetilde{\Qc}_0={\Qc}_0\cup\{\xi_1,\ldots,\xi_m\}.
$$

\item[{\bf (B)}]
Some of the new vertices $\{\xi_1,\ldots,\xi_m\}$ are related 
by $2$-paths through
the vertices $\{x_1,\ldots,x_n\}$ of the underlying quiver $\Qc$.
The set of arrows $\Qc_1$ is completed by the set of $2$-paths:
$$
\widetilde{\Qc}_1=\Qc_1\cup_{k}\left\{(\xi_i\to{}x_k\to\xi_j)\right\}.
$$

\item[{\bf (C)}] $2$-paths with opposite orientation:
$\xi_i\to{}x_k\to\xi_j$ and $\xi_j\to{}x_k\to\xi_i$ 
are not allowed.

\end{enumerate}
\end{defn}

Although $\widetilde{\Qc}$ is a hypergraph, and therefore can hardly be represented
graphically, the above definition is illustrated by the following diagram
$$
 \xymatrix{
{\color{red}\xi_{i_1}}\ar@{->}[rrrd]&\dots&
{\color{red}\xi_{i_r}}\ar@{->}[rd]&&
{\color{red}\xi_{j_1}}\ar@{<-}[ld]&\dots&
{\color{red}\xi_{j_s}}\ar@{<-}[llld]\\
&&& x_k&&
}
$$
representing a certain collection of $2$-paths (less or equal to $r\times{}s$ paths between the odd variables).

\begin{rem}
Note that since all odd vertices are frozen in the current approach  we do not consider arrows between the odd vertices of $\widetilde{\Qc}$,
and this is certainly an interesting question whether one can add such arrows and create a more rich
combinatorics of extended quivers.
\end{rem}

\subsection{Extended quiver mutations}\label{GraphMut}

Let us define the mutation rules of an extended quiver.
These mutations are performed at even vertices only.

\begin{defn}
\label{MutDef}
Given an extended quiver~$\widetilde{\Qc}$ and an even vertex $x_k\in\Qc_0$,
the mutation~$\widetilde\mu_k$ is defined by the
following rules:
\begin{enumerate}
\item[(0)] 
The underlying quiver $\Qc\subset\widetilde{\Qc}$ mutates
according to the same rules as in the classical case~\cite{FZ1}.

\item[(1*)] 
Given a $2$-path $(\xi_i\to x_k\to\xi_j)\in\widetilde{\Qc}_1$,
for all $x_\ell\in\Qc_0$
connected to~$x_k$ by an outgoing arrow $(x_k\to{}x_\ell)$, 
add the $2$-paths $(\xi_i\to x_\ell\to\xi_j)$.

\item[(2*)]  Reverse all the $2$-paths through $x_k$, i.e.,
change $(\xi_i\to x_k\to\xi_j)$ to $(\xi_i\leftarrow x_k\leftarrow\xi_j)$.

\item[(3*)]  Remove two-by-two the $2$-paths through $x_k$
which are identical but have opposite orientations, eventually created by rule (1*), i.e.,
$2$-paths $(\xi_i\to{}x_\ell\to\xi_j)$ and 
$(\xi_i\leftarrow{}x_\ell\leftarrow\xi_j)$ cancel each other.
\end{enumerate}
\end{defn}

The above rules can be illustrated by the diagram:
$$ 
\xymatrix{
&{\color{red}\xi_i}\ar@{->}[d]&
{\color{red}\xi_j}\ar@<-2pt>@{<-}[ld]\\
x_m&x_k\ar@{<-}[l]&x_\ell\ar@{<-}[l]
}
\qquad
\xymatrix{\\
\stackrel{\widetilde\mu_k}{\Longrightarrow}}
\qquad
 \xymatrix{
&{\color{red}\xi_i}\ar@<2pt>@{->}[rd]\ar@{<-}[d]&
{\color{red}\xi_j}\ar@<-2pt>@{->}[ld]\ar@{<-}[d]\\
x_m\ar@/^-1pc/[rr]&x_k'\ar@{->}[l]&x_\ell\ar@{->}[l]
}
$$

\begin{ex}
One has
$$
 \xymatrix{
{\color{red}\xi_1}\ar@{->}[rd]\ar@{->}[d]&
{\color{red}\xi_2}\ar@{<-}[ld]\ar@{<-}[d]\\
x_1&x_2\ar@{<-}[l]
}
\quad
\xymatrix{\\
\stackrel{\widetilde\mu_1}{\Longrightarrow}}
\quad
 \xymatrix{
{\color{red}\xi_1}\ar@<3pt>@{->}[rd]\ar@{->}[rd]\ar@{<-}[d]&
{\color{red}\xi_2}\ar@{->}[ld]\ar@{<-}[d]\ar@<3pt>@{<-}[d]\\
x_1'&x_2\ar@{->}[l]
}
$$
This mutation creates a new $2$-path $(\xi_1\to{}x_2\to\xi_2)$,
so that the resulting extended quiver has two such paths 
(and not four as appears if one counts arrows between $\xi$'s and $x$'s).
\end{ex}

\subsection{Weighted quivers: the case of  two 
odd vertices}\label{WeSect}

The simplest class of extended quivers are those that have exactly two 
odd vertices,~$\xi_1$ and~$\xi_2$.
Such an extended quiver is equivalent to the usual quiver~$\Qc$
together with a function on the set of vertices 
$$
w:\Qc_0\to\Z
$$
that counts the number of oriented $2$-paths through the vertex.
For every~$x_i\in\Qc_0$, 
\begin{enumerate}
\item
a {\it positive} value of~$w(x_i)$
corresponds to the number of $2$-path $\xi_1\to{}x_i\to\xi_2$;
\item
a {\it negative} value of~$w(x_i)$
corresponds to the number of $2$-path $\xi_2\to{}x_i\to\xi_1$.
\end{enumerate}
\noindent
This function was called the {\it weight function} in~\cite{OT} where it was applied to integer sequences.

The quiver mutations defined above read as follows in terms of the weight function.
The mutation~$\widetilde\mu_k$ at $k$th vertex sends $w$ to
the new function~$\widetilde\mu_k(w)$ defined by:
\begin{equation}
\label{WFMut}
\begin{array}{rcll}
\widetilde\mu_k(w)(i) &=& w(i)+[b_{ki}]_+w(k), & i\not={}k,\\[4pt]
\widetilde\mu_k(w)(k) &=& -w(k),&
\end{array}
\end{equation}
where $[b_{ki}]_+$ is the number of arrows from the vertex
$k$ to the vertex $i$, and if the vertices are oriented from $i$ to $k$, then
$[b_{ki}]_+=0$.

\begin{ex}
\label{ThEx}
The following ``Somos-4 quivers'' (cf.~\cite{FM} and~\cite{Mar}):
$$
a)  \xymatrix{
x_4\ar@<3pt>@{->}[rd]\ar@{->}[rd]\ar@{<-}[d]&
x_1\ar@<-3pt>@{<-}[ld]\ar@{<-}[ld]\ar@{->}[d]\ar@{->}[l]\\
x_3&x_2\ar@{->}[l]\ar@<-2pt>@{->}[l]\ar@<2pt>@{->}[l]
}
\xymatrix{\\
\qquad\hbox{and}\qquad}
b) \xymatrix{
x_4\ar@<3pt>@{<-}[rd]\ar@{<-}[rd]\ar@{->}[d]&
x_1\ar@<-3pt>@{->}[ld]\ar@{->}[ld]\ar@{<-}[d]\ar@{<-}[l]\\
x_3&x_2\ar@{<-}[l]\ar@<-2pt>@{<-}[l]\ar@<2pt>@{<-}[l]
}
$$
are examples of so-called {\it period~$1$ quivers}.
Each of them performs a cyclic rotation under the mutation at $x_1$, e.g.,
$$
 \xymatrix{
x_4\ar@<3pt>@{->}[rd]\ar@{->}[rd]\ar@{<-}[d]&
x_1\ar@<-3pt>@{<-}[ld]\ar@{<-}[ld]\ar@{->}[d]\ar@{->}[l]\\
x_3&x_2\ar@{->}[l]\ar@<-2pt>@{->}[l]\ar@<2pt>@{->}[l]
}
\xymatrix{\\
\quad
\stackrel{\mu_1}{\Longrightarrow}
\quad}
 \xymatrix{
x_4\ar@<3pt>@{->}[rd]\ar@{->}[rd]\ar@{<-}[d]\ar@<-2pt>@{<-}[d]\ar@<2pt>@{<-}[d]&
x_1'\ar@<-3pt>@{->}[ld]\ar@{->}[ld]\ar@{<-}[d]\ar@{<-}[l]\\
x_3&x_2\ar@{->}[l]
}
$$
In both cases, there exists a period~$1$ weight function:
$$
\begin{array}{ccccc}
a)&w(x_1) = 1,& w(x_2) = 0,& w(x_3) =0,& w(x_4) = -1,\\[2pt]
b)&w(x_1) = 1,& w(x_2) = 1,& w(x_3) =-1,& w(x_4) = -1,
\end{array}
$$
respectively, that also rotates under mutation~(\ref{WFMut}); see~\cite{OT}.
In our initial notation, the above weight functions correspond to the following extended quivers:
$$
a) \xymatrix{
{\color{red}\xi_1}\ar@<3pt>@{->}[rd]\ar@{<-}[d]&
{\color{red}\xi_2}\ar@<-2pt>@{->}[ld]\ar@{<-}[d]\\
x_4\ar@<3pt>@{->}[rd]\ar@{->}[rd]\ar@{<-}[d]&
x_1\ar@<-3pt>@{<-}[ld]\ar@{<-}[ld]\ar@{->}[d]\ar@{->}[l]\\
x_3&x_2\ar@{->}[l]\ar@<-2pt>@{->}[l]\ar@<2pt>@{->}[l]
}
\xymatrix{\\
\\
\qquad\hbox{and}\qquad}
b) \xymatrix{
{\color{red}\xi_1}\ar@<3pt>@{->}[rd]\ar@{<-}[d]
\ar@/^2.2pc/[rdd]&
{\color{red}\xi_2}\ar@<-2pt>@{->}[ld]\ar@{<-}[d]\ar@/^-2.2pc/[ldd]\\
x_4\ar@<3pt>@{<-}[rd]\ar@{<-}[rd]\ar@{->}[d]&
x_1\ar@<-3pt>@{->}[ld]\ar@{->}[ld]\ar@{<-}[d]\ar@{<-}[l]\\
x_3\ar@/^1.7pc/[uu]&x_2\ar@/^-1.7pc/[uu]\ar@{<-}[l]\ar@<-2pt>@{<-}[l]\ar@<2pt>@{<-}[l]
}
$$
Note that the period~$1$ weight function is unique up to a multiple.

This leads to a family of extensions of the Somos-4 sequence~\cite{OT}.
 \end{ex}

\subsection{Relation to BGP-functor}\label{WeBGPSect}

Let us mention that formula~(\ref{WFMut})
appeared in several different situations~\cite{GHK,Gal1,Gal2}.
Furthermore, remarkably enough, this formula is nothing else but the dual formula for that of the classical
{\it Bernstein-Gelfand-Ponomarev functor (BGP-functor)} in the theory of quiver representations.

Let $Q$ be a quiver, its vertex $v$ be a source. 
BGP-functor $F_v$ acts on the quivers and their representations in the following way. It takes quiver $Q$ to $\bar Q$ whose vertices and arrows 
are in one-to-one correspondence with vertices and arrows of $Q$, 
for vertex $w$ and arrow $\alpha$ of $Q$ we will denote the corresponding vertex and arrow of $\bar Q$ by
 $\bar w$ and $\bar \alpha$, correspondingly.
$F_v(w)=\bar w$, $F_v(\alpha)=\bar\alpha$ for any $w$ and $\alpha$ of $Q$. 
If edge $\alpha=(w_1\to w_2)$, where $w_1\ne v$ and $w_2\ne v$, then $F_v(\alpha)=\bar\alpha=(\bar w_1\to\bar w_2)$.
Edge $\alpha=(v\to w)$ becomes $\bar\alpha=(\bar w\to\bar v)$. Hence, vertex $\bar v$ is a sink in $\bar Q$.

If $R$ is a representation of quiver $Q$, i.e., any vertex $u$ of $Q$ corresponds to a vector space~$R(u)$ 
and an arrow $\alpha=(s\to t)$ corresponds to a linear map $R(\alpha):R(s)\to R(t)$,
then, $F_v: Rep(Q)\to Rep(\bar Q)$ is as follows:
$F_v(R(w))=R(w)$ for any $w\ne v$; $F(R(\alpha))=R(\alpha)$ if none of endpoints of $\alpha$ coincides with $v$. 
Finally, $F_v(R(v))$ is defined as follows. 
Let 
$$
{\bf\beta}=\{\beta_i:v\to w_i, i=[1,k]\}
$$ 
be the collection of all arrows from $v$, $R({\bf \beta}):R(v)\to \oplus_{i=1}^k R(w_i)$ is a sum of all maps $R(\beta_i)$, i.e., 
$R(\beta)(x)=(R(\beta_1)(x),\dots,R(\beta_k)(x))$ for any $x\in R(v)$. 
Define 
$$
\Im(R({\bf \beta}))=\{{\bf \beta}(x), x\in R(v)\}=\{\oplus_i\beta_i(x)\in\oplus_{i=1}^k R(w_i),\ x\in R(v)\},
$$ 
and set $F_v(R(v))=\oplus_{i=1}^k R(w_i)/\Im(R({\bf\beta}))$.
Assuming a nondegeneracy property: $\Im(R({\bf\beta}))\simeq R(v)$ we see that dimension vector 
$(\dim_R(w))_{w\in Q}$, $\dim_R(w)=\dim(R(w))$ for all $w\in Q$ of representation~$R$ changes as follows.  
$$
\begin{array}{rcl}
\overline{\dim}_{F_v(R)}(\bar w) &=& \dim_R(w),\qquad w\ne v,\\[4pt]
\overline{\dim}_{F_v(R)}(\bar v) &= & \sum_{i=1}^k\dim_R(w_i)-\dim_R(v).
\end{array}
$$
Note that formula~(\ref{WFMut}) describes the dual transformation.

Similarly, formula~(\ref{WFMut}) describes the dual transformation to the change of dimension vector if $v$ is a sink as well.

\section{Exchange relations}\label{Echangist}

We define the exchange relations of the even variables
$\{x_1,\ldots,x_n\}$
corresponding to mutations of extended quivers defined in the previous section.

\subsection{Introducing exchange relations}\label{EchangistSub}

\begin{defn}
\label{ExchDef}
Given an extended quiver $\widetilde{\Qc}$,
the mutation $\widetilde\mu_k$ replaces the variable $x_k$ by a new variable, $x_k'$,
other variables remain unchanged:
$$
\widetilde\mu_k:\{x_1,\ldots,x_n,\xi_1,\ldots,\xi_m\}\to
\{x_1,\ldots,x_n,\xi_1,\ldots,\xi_m\}\setminus\{x_k\}\cup\{x_k'\}.
$$
The new variable is defined by the following formula
\begin{equation}
\label{Mute}
x_kx_k'=
\prod\limits_{\substack{x_k\to x_\ell }}x_\ell
\quad+\quad
\prod\limits_{\substack{\xi_i\to{}x_k\to\xi_j}}(1+\xi_i\xi_j)
\prod\limits_{\substack{x_\ell\to x_k }}x_\ell,
\end{equation}
that will be called, as in the classical case, an exchange relation.

We denote by $A(\widetilde\Qc)$ the associative commutative
superalgebra generated by the initial variables $\{x_1,\ldots,x_n,\xi_1,\ldots,\xi_m\}$
together with all the mutations of $x_k$.
\end{defn}

Note that,
after substitution $\xi\equiv0$, the above formula obviously coincides with the
exchange relations for the classical cluster algebra corresponding to the underlying quiver $\Qc$.
The first summand in (\ref{Mute}) is exactly as in the classical case, the second
one is modified.

\begin{rem}
\label{SqMutLem}
Note that, unlike the classical case, the above mutation of $x_k$ is
{\it not an involution}.
\end{rem}

\subsection{Example: quivers with weight function}
In the simplest case of two colored vertices,
that we considered in Section~\ref{WeSect},
formula~(\ref{Mute}) reads
$$
x_kx_k'=
\prod\limits_{\substack{x_k\to x_\ell }}x_\ell
\quad+\quad
(1+w_k\varepsilon)
\prod\limits_{\substack{x_\ell\to x_k }}x_\ell,
$$
where $\varepsilon:=\xi_1\xi_2$ denotes the product of the odd variables (cf.~\cite{OT}).
Indeed, one obviously has $(1+\varepsilon)^{w_k}=(1+w_k\varepsilon)$ since $\varepsilon^2=0$.

\subsection{Example: the supergroup $\OSp(1|2)$}\label{ElExSec}
One of the first examples of cluster algebras
given in~\cite{FZ1} is the algebra of regular functions on the Lie group $\SSL(2)$.
We consider here its superanalog.

The most elementary superanalog of the group $\SSL(2)$ is the supergroup $\OSp(1|2)$.
Let $\Rc=\Rc_0\oplus\Rc_1$ be a commutative ring.
The set of $\Rc$-points of the supergroup $\OSp(1|2)$  is the following $3|2$-dimensional supergroup of matrices:
\begin{equation}
\label{OSpRel}
\left(
\begin{array}{cc|c}
a&b&\g\\[4pt]
c&d&\d\\[4pt]
\hline
\a&\b&e
\end{array}
\right)
\qquad
\hbox{such that}
\qquad
\begin{array}{rcl}
ad&=&1+bc-\a\b,\\[4pt]
e&=&1+\a\b,\\[4pt]
\g&=&a\b-b\a\\[4pt]
\d&=&c\b-d\a.
\end{array}
\end{equation}
The elements $a,b,c,d,e\in\Rc_0$, and $\a,\b,\g,\d\in\Rc_1$;
these elements are generators of the algebra of regular functions on $\OSp(1|2)$.

Choose the initial cluster coordinates $(a,b,c,\a,\b)$,
and consider the following quiver:
$$
 \xymatrix{
&{\color{red}\b}\ar@{->}[rd]&&
{\color{red}\a}\ar@{<-}[ld]\\
b\ar@{<-}[rr]&& a\ar@{->}[rr]&&c
}
$$
The coordinate $d$ is then the mutation of $a$, i.e., $a'=d$.
Indeed, the exchange relation~\eqref{Mute} for the coordinate $a$ reads
$$
aa'=1+bc+\b\a,
$$
which is precisely the first equation for $\OSp(1|2)$ relating $a$ and $d$.
Note that, similarly to the $\SL_2$-case 
the coordinates $b$ and $c$ are frozen cf.~\cite{FZ1}.

\section{Laurent phenomenon and invariant presymplectic form}\label{PrSect}

In this section we discuss two general properties of the constructed algebras, namely
the Laurent phenomenon and  invariant presymplectic form.

\subsection{The Laurent phenomenon}\label{PrLPSect}
Since the division by odd coordinates is not well-defined, 
all the Laurent polynomials we consider have denominators
equal to some monomials in $\{x_1,\ldots,x_n\}$.

Our first statement is the following.

\begin{thm}
\label{LeurThm}
For every extended quiver~$\widetilde{\Qc}$, all the 
rational functions $x_k', x_k'',\ldots$,
obtained recurrently by any series of consecutive admissible mutations,
 are Laurent polynomials
in the initial coordinates $\{x_1,\ldots,x_n,\xi_1,\ldots,\xi_m\}$.
\end{thm}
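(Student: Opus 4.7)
The plan is to reduce the Laurent phenomenon in the extended setting to its classical counterpart \cite{FZ1}, via the ``monomial transform'' foreshadowed in the Introduction, by treating the Grassmann variables as a ring of frozen coefficients.

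The starting observation is that each elementary factor $1+\xi_i\xi_j$ is a unit in the ring $R:=\C[\xi_1,\ldots,\xi_m]$: since $(\xi_i\xi_j)^2=0$ we have $(1+\xi_i\xi_j)^{-1}=1-\xi_i\xi_j$. Hence the Grassmann correction $\eta_k:=\prod_{\xi_i\to x_k\to\xi_j}(1+\xi_i\xi_j)$ appearing in \eqref{Mute} is a unit in $R$ with a polynomial inverse, and the exchange relation takes the shape
$$x_k x_k' = \prod_{x_k\to x_\ell} x_\ell + \eta_k \prod_{x_\ell\to x_k} x_\ell,$$
which is formally a Fomin-Zelevinsky exchange relation for a cluster algebra with coefficients in $R$.

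Given this, I would proceed by induction on the length of the mutation sequence, following the Caterpillar Lemma strategy of \cite{FZ1}. The inductive step reduces, as in the classical proof, to two checks: (a)~after a single mutation $\widetilde\mu_k$, every original variable is expressible as a Laurent polynomial in the new cluster with coefficients in $R$; and (b)~the exchange polynomial at any vertex in the mutated extended quiver $\widetilde\mu_k(\widetilde\Qc)$ is exactly what one would obtain by transporting coefficients through the classical coefficient-mutation rule. Check (a) is immediate from \eqref{Mute} once one observes that $\eta_k$ is a unit.

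The substance of the argument lies in (b): one must show that the combinatorial update of 2-paths prescribed by rules (1*)--(3*) matches the multiplicative update of the $\eta$-coefficients forced by algebraic consistency. The dictionary I would set up is as follows. Rule (2*) realizes the inversion $\eta_k\mapsto\eta_k^{-1}$, since reversing the 2-path $\xi_i\to x_k\to\xi_j$ turns $1+\xi_i\xi_j$ into $1-\xi_i\xi_j=(1+\xi_i\xi_j)^{-1}$. Rule (1*) multiplies $\eta_\ell$ at each out-neighbor $x_\ell$ of $x_k$ by the power of $\eta_k$ dictated by $[b_{k\ell}]_+$. Rule (3*) realizes the cancellation $(1+\xi_i\xi_j)(1-\xi_i\xi_j)=1$ that arises when (1*) creates a 2-path opposite to one already present.

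The main obstacle I anticipate is the bookkeeping in step (b), in particular tracking multiplicities of parallel 2-paths and verifying that condition (C) of Definition~\ref{AllGraphDef} is preserved by each mutation; this should be what the word ``admissible'' in the statement of Theorem~\ref{LeurThm} refers to, namely that the pairwise cancellations demanded by (3*) always complete and leave a legitimate extended quiver. Once this combinatorial--algebraic dictionary is verified, the classical Laurent phenomenon of Fomin-Zelevinsky applied over the coefficient ring $R$ yields Theorem~\ref{LeurThm} directly, with the denominators being monomials in the $x_i$'s alone, since the odd variables enter only as coefficients and are never mutated.
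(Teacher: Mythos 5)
Your proposal is correct and follows essentially the same route as the paper: the paper's proof is a one-line reduction to the classical Laurent phenomenon of Fomin--Zelevinsky via the identification of Section~\ref{MTSect}, which packages your coefficient-ring dictionary as the correspondence $\mathcal I$ sending each $2$-path group to an extra even vertex carrying the unit $y_{ij}=1+\xi_i\xi_j$, with Proposition~\ref{RedProp} playing the role of your step (b). Your observation that $(1+\xi_i\xi_j)^{-1}=1-\xi_i\xi_j$ and the matching of rules (1*)--(3*) with ordinary quiver/coefficient mutation is exactly the content of that identification, so the argument goes through as you describe.
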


\begin{proof}
This statement follows from the classical Laurent phenomenon~\cite{FZ2},
after the identification of Section~\ref{MTSect}.
\end{proof}

\subsection{The presymplectic form}\label{ISFSect}
Consider the following differential $2$-form:
\begin{equation}
\label{SupOM}
\displaystyle
\om=
\sum\limits_{\substack{x_i\rightarrow x_j }}
\frac{dx_i\wedge{}dx_j}{x_ix_j}+
\sum\limits_{\substack{\xi_i\to{}x_\ell\to\xi_j}}
\frac{d\left(
\xi_i\xi_j
\right)\wedge
dx_\ell}{x_\ell}.
\end{equation}
Note that the summation goes over the elements of $\widetilde\Qc_1$.
The first summand is nothing but the standard presymplectic form
(see~\cite{GSV1,GSV}) associated to the cluster algebra
corresponding to the underlying quiver $\Qc$.

\begin{thm}
\label{SymThm}
For every extended quiver~$\widetilde{\Qc}$, the form $\om$
is invariant under mutations $\widetilde\mu_k$, combined with the exchange relations~(\ref{Mute}).
\end{thm}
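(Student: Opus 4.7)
The strategy is to reduce the theorem to the classical Gekhtman--Shapiro--Vainshtein invariance via the ``monomial transform'' introduced in Section~\ref{MTSect} to recast extended quiver mutations as classical cluster mutations, already used for Theorem~\ref{LeurThm}. I would split
\begin{equation*}
\omega \;=\; \omega_Q + \omega_\xi, \qquad \omega_Q = \sum_{x_i \to x_j} \frac{dx_i \wedge dx_j}{x_i x_j}, \qquad \omega_\xi = \sum_{\xi_i \to x_\ell \to \xi_j} \frac{d(\xi_i\xi_j) \wedge dx_\ell}{x_\ell},
\end{equation*}
and work with modified even variables $\widetilde x_\ell := x_\ell \cdot g_\ell$, where each $g_\ell$ is a product of factors $(1+\xi_i\xi_j)^{\pm 1/2}$ attached to the $2$-paths incident to $x_\ell$. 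Since $(\xi_i\xi_j)^2 = 0$, rational exponents are unambiguous and $d\log(1+\xi_i\xi_j) = d(\xi_i\xi_j)$. By the monomial transform, the exponents can be arranged so that (\ref{Mute}) becomes the classical Fomin--Zelevinsky exchange relation for the $\widetilde x_\ell$, and so that $\widetilde \mu_k$ corresponds to the classical mutation of the transformed quiver $\widetilde \Qc$.

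The core of the argument is then the identity
\begin{equation*}
\omega \;=\; \sum_{\widetilde x_i \to \widetilde x_j} \frac{d\widetilde x_i \wedge d\widetilde x_j}{\widetilde x_i \widetilde x_j},
\end{equation*}
where the sum runs over arrows of $\widetilde \Qc$. To verify it, expand $d\log \widetilde x_\ell = d\log x_\ell + d\log g_\ell$ on the right: the ``pure $\log x$'' part reproduces $\omega_Q$, and the cross terms are bilinear in $d\log x_\ell$ and $d(\xi_i\xi_j)$, so they must cancel $\omega_\xi$ term by term. The sign conventions fixed by the orientation of each $2$-path, together with condition~\textbf{(C)} of Definition~\ref{AllGraphDef} ruling out opposite $2$-paths, determine the exponents $\pm 1/2$ unambiguously and make the matching work. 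Once this identity is established, invariance is immediate: the right-hand side is the classical cluster presymplectic form of $\widetilde \Qc$, which is mutation-invariant by~\cite{GSV1,GSV}, and this invariance pulls back along the monomial transform to invariance of $\omega$ under~$\widetilde\mu_k$ combined with~(\ref{Mute}).

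The main obstacle is the compatibility asserted in the second step: the monomial transform is engineered to reduce the exchange relation, and it is not a~priori obvious that the same choice of exponents also absorbs $\omega_\xi$ into the classical $2$-form. This reduces to a combinatorial identity relating the multiplicities of $2$-paths through the mutating vertex to those through its neighbors, and is the sole place where the particular form of the factor $\prod(1+\xi_i\xi_j)$ appearing in the odd part of~(\ref{Mute}) plays a role. All remaining verifications -- the classical GSV invariance of $\omega_Q$ and the fact that $d\log(1+\xi_i\xi_j)$ is a total differential quadratic in odd variables -- are routine.
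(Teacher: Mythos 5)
Your overall instinct---reduce to the classical Gekhtman--Shapiro--Vainshtein invariance via the identification of Section~\ref{MTSect}---is the right one, and it is what the paper does. But the mechanism you propose does not work: the key identity
$\omega=\sum_{\widetilde x_i\to\widetilde x_j}\frac{d\widetilde x_i\wedge d\widetilde x_j}{\widetilde x_i\widetilde x_j}$,
with $\widetilde x_\ell=x_\ell\,g_\ell$ and $g_\ell$ a product of $(1+\xi_i\xi_j)^{\pm1/2}$, is false. Take the smallest example: one even vertex $x_1$, odd vertices $\xi_1,\xi_2$, a single $2$-path $\xi_1\to x_1\to\xi_2$, and no even--even arrows. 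Then $\omega=d(\xi_1\xi_2)\wedge dx_1/x_1\neq0$, while your right-hand side is an empty sum. More generally, expanding $d\log\widetilde x_\ell=d\log x_\ell+d\log g_\ell$, the cross terms contribute $d(\xi_a\xi_b)\wedge d\log x_j$ only for $2$-paths through a \emph{neighbour} $x_i$ of $x_j$, never through $x_j$ itself, so they cannot match $\omega_\xi$ ``term by term''; the coefficient equation one actually gets is a linear system involving the exchange matrix, which is not solved by local exponents $\pm1/2$ and need not be solvable at all. You also silently discard the terms $d\log g_i\wedge d\log g_j$, which are quadratic in $d(\xi\xi)$ and do not vanish in general.

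The correct reduction is simpler and requires no square roots: do \emph{not} absorb the odd factors into the even variables, but keep each $y_{ij}=1+\xi_i\xi_j$ as a separate \emph{frozen} vertex of the colored quiver $\Qc^c$ of~\eqref{IdEq}, with one arrow $y_{ij}\to x_\ell$ per $2$-path $\xi_i\to x_\ell\to\xi_j$. Since $(\xi_i\xi_j)^2=0$, one has $d\log(1+\xi_i\xi_j)=d(\xi_i\xi_j)$, so the odd part of~\eqref{SupOM} is exactly $\sum_{y_{ij}\to x_\ell}\frac{dy_{ij}\wedge dx_\ell}{y_{ij}x_\ell}$, and $\omega$ is \emph{literally} the standard compatible presymplectic form of $\Qc^c$; likewise~\eqref{Mute} is the classical exchange relation of $\Qc^c$ because $\prod(1+\xi_i\xi_j)=\prod y_{ij}$. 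Invariance is then the classical statement of~\cite{GSV1,GSV}, which is the paper's (one-line) argument. Your appeal to condition~\textbf{(C)} of Definition~\ref{AllGraphDef} and to a ``combinatorial identity relating multiplicities'' is not needed once this is set up correctly.
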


In other words, expressing $x_k$ in terms of the other variables and $x_k'$,
and substituting to~(\ref{SupOM}),
one obtains precisely the presymplectic form associated to the extended quiver 
$\widetilde\mu_k(\widetilde{\Qc})$.

\begin{proof}  Note that $d(\xi_i\xi_j)=d(1+\xi_i\xi_j)$. Then, the statement follows from the standard result of invariance of compatible presymplectic form under a cluster mutation.
\end{proof}

\section{Reduction to the classical case: the monomial transform}\label{MTSect}

In this section we explain how to reduce the mutation rule of Section~\ref{GraphMut}
and the exchange relations~(\ref{Mute}) to the usual mutations combined with a coordinate transformation
that, following~\cite{GSV}, we call the {\it monomial transform}.
This in particular will imply the Laurent phenomenon formulated in Section~\ref{PrLPSect}.

\subsection{Monomial transform: the definition}

Given a quiver $\Qc$ (with neither loops nor $2$-cycles),
assume that the vertices of $\Qc$ are split into two groups.
To fix the notation, we set:
$$
\Qc_0=
\left\{x_1,\ldots,x_{n_1}\right\}
\cup
\left\{y_1,\ldots,y_{n_2}\right\}.
$$
In other words, $\Qc$ is a colored quiver (with two colors).

The {\it monomial transform} at $x_k$, that we denote by $T_k$, consists of three steps:
\begin{enumerate}
\item
add a new arrow $(x_m\to{}y_i)$ whenever $x_m$ and $y_i$ are connected to $x_k$ by ingoing arrows;
\item
add a new arrow $(y_i\to{}x_\ell)$ whenever $y_i$ is connected to $x_k$ by an ingoing arrow and $x_\ell$
is connected to $x_k$ by an outgoing arrow;
\item
change the variable $x_k$ to
\begin{equation}
\label{MonoM}
\widetilde x_k=
x_k/y_i.
\end{equation}
\end{enumerate}

The monomial transform is illustrated by the following diagram:
$$ 
\xymatrix{
{y_i}\ar@{->}[rd]&&
{y_j}\ar@<-2pt>@{<-}[ld]\\
x_m&x_k\ar@{<-}[l]&x_\ell\ar@{<-}[l]
}
\xymatrix{\\
\qquad
\stackrel{T_k}{\Longrightarrow}
\qquad}
\xymatrix{
{y_i}\ar@{->}[rd]\ar@{->}[rrd]&&
{y_j}\ar@<-2pt>@{<-}[ld]\\
x_m\ar@{->}[u]&\widetilde x_k\ar@{<-}[l]&x_\ell\ar@{<-}[l]
}
$$

\subsection{From extended quiver to colored quiver}
Given an extended quiver $\widetilde{\Qc}$ (cf. Section~\ref{DeFCluSS}),
let us construct a colored quiver $\Qc^c$ according to the following rule.
\begin{enumerate}
\item
If $\widetilde{\Qc}_0=\{x_1,\ldots,x_n,\xi_1,\ldots,\xi_m\}$,
we set  
\begin{equation}
\label{IdEq}
{\Qc}_0^c=
\left\{
x_1,\ldots,x_n,y_{ij},\;\vert\;1\leq i<j\leq m
\right\};
\qquad
y_{ij}:=1+\xi_i\xi_j.
\end{equation}

\item
For every oriented $2$-path $(\xi_i\to x_k\to\xi_j)$, add an arrow $(y_{ij}\to x_k)$.
\end{enumerate}
Conversely, given a colored quiver $\Qc^c$ with
${\Qc}_0^c=
\left\{
x_1,\ldots,x_n,y_{ij},\;\vert\;1\leq i<j\leq m
\right\}$,
one reconstructs an extended quiver $\widetilde{\Qc}$ with $\widetilde{\Qc}_0=\{x_1,\ldots,x_n,\xi_1,\ldots,\xi_m\}$.
We denote by $\mathcal I$ the above identification between the extended quivers and the chosen class of colored quivers.

The corresponding diagram is:
$$ 
\xymatrix{
{\color{red}\xi_i}\ar@{->}[rd]&&
{\color{red}\xi_j}\ar@<-2pt>@{<-}[ld]\\
x_m&x_k\ar@{<-}[l]&x_\ell\ar@{<-}[l]
}
\xymatrix{\\
\qquad
\stackrel{\mathcal I}{\simeq}
\qquad}
 \xymatrix{
&y_{ij}\ar@{->}[d]&\\
x_m&x_k\ar@{<-}[l]&x_\ell\ar@{<-}[l]
}
$$

\subsection{Mutations composed with monomial transforms}
It turns out that the mutations of extended quivers and the exchange relations
described in Sections~\ref{GraphMut} and~\ref{EchangistSub}
are nothing but the usual mutations of the corresponding colored quivers
composed with the monomial transform.

\begin{prop}
\label{RedProp}
One has
$\widetilde\mu_k=\mathcal{I}\circ\mu_k\circ\mathcal{I}$.
\end{prop}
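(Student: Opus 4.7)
The plan is to unravel the classical quiver mutation $\mu_k$ applied to the colored quiver $\Qc^c=\mathcal{I}(\widetilde{\Qc})$ and compare it step-by-step with the rules of Definition~\ref{MutDef} and the exchange relation~(\ref{Mute}). First I fix the orientation bookkeeping: under $\mathcal{I}$ a 2-path $\xi_i\to x_k\to\xi_j$ corresponds to an arrow $y_{ij}\to x_k$, while its reversed version $\xi_i\leftarrow x_k\leftarrow\xi_j$ corresponds to $x_k\to y_{ij}$. The key structural feature of the colored quivers in the image of $\mathcal{I}$ is that the $y$-vertices are pairwise disconnected: every arrow either joins two $x$-vertices or joins an $x$-vertex with a $y$-vertex, and the $y$-vertices are frozen.

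Next I compare the three classical mutation steps at $x_k$ with each rule of Definition~\ref{MutDef}. Rule (0) holds because the restriction of $\mu_k$ to the $x$-vertices is the classical mutation of $\Qc$. Rule (2*) follows from the reversal of all arrows incident to $x_k$, which turns $y_{ij}\to x_k$ into $x_k\to y_{ij}$ and hence reverses the corresponding 2-paths. Rule (1*) matches the creation of composite arrows: for each pair $y_{ij}\to x_k\to x_\ell$ the classical mutation introduces $y_{ij}\to x_\ell$, which translates back to the new 2-path $\xi_i\to x_\ell\to \xi_j$. Crucially, no arrow between two $y$-vertices is produced, since no outgoing arrow from $x_k$ to a $y$-vertex exists in $\mathcal{I}(\widetilde{\Qc})$; consequently the output still lies in the image of $\mathcal{I}$. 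Rule (3*) matches the classical cancellation of oppositely oriented 2-cycles incident to an $x$-vertex.

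Finally I match the exchange relation. The classical formula for $\mu_k$ on $\Qc^c$ reads
$$
x_kx_k'=\prod_{v\to x_k}v\,+\,\prod_{x_k\to v}v,
$$
where the outgoing product contains only $x_\ell$-variables (since $x_k$ has no outgoing arrow to any $y$-vertex in $\mathcal{I}(\widetilde{\Qc})$) and the incoming product splits as $\prod_{x_\ell\to x_k}x_\ell\cdot\prod_{y_{ij}\to x_k}(1+\xi_i\xi_j)$. This is exactly formula~(\ref{Mute}).

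The main subtlety, and really the only place requiring care, is the orientation convention for arrows to $y_{ij}$: one must verify that after $\mu_k$ is applied every $y_{ij}$ is still incident only to $x$-vertices and in a consistent orientation, so that $\mathcal{I}^{-1}$ is well-defined on the output. Once this closure of the image of $\mathcal{I}$ under $\mu_k$ is checked, the proposition reduces to the term-by-term matching above.
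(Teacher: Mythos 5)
There is a genuine gap, and it sits exactly at the point you flag as ``the main subtlety'' but then dispose of by assertion. Your argument rests on the claim that in $\mathcal{I}(\widetilde{\Qc})$ the vertex $x_k$ has no outgoing arrow to a $y$-vertex. Under your own dictionary this is false in general: a $2$-path $\xi_j\to x_k\to\xi_i$ with $i<j$ (a ``negatively oriented'' $2$-path, i.e.\ a negative value of the weight function of Section~\ref{WeSect} --- see $w(x_4)=-1$ in Example~\ref{ThEx}(a)) corresponds to an outgoing arrow $x_k\to y_{ij}$. Worse, even if the initial extended quiver has none, rule (2*) of Definition~\ref{MutDef} reverses all $2$-paths through the mutated vertex, so after a single mutation such arrows are always present; the ``closure of the image of $\mathcal{I}$ under $\mu_k$'' that you correctly identify as the thing to check actually fails for the restricted class of colored quivers your term-by-term matching needs. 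Consequently the argument cannot be iterated, and already the second mutation at $x_k$ falls outside it.

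In the presence of an arrow $x_k\to y_{ij}$ the matching itself breaks in two places. For the exchange relation, the classical formula puts the factor $y_{ij}=1+\xi_i\xi_j$ into the \emph{outgoing} monomial, whereas \eqref{Mute} puts $(1+\xi_j\xi_i)=(1+\xi_i\xi_j)^{-1}$ into the \emph{incoming} one; the two right-hand sides then differ by the unit $1+\xi_i\xi_j$, i.e.\ by precisely the rescaling \eqref{MonoM} of the monomial transform. For the quiver part, rule (1*) propagates every $2$-path along the \emph{outgoing} arrows $x_k\to x_\ell$, while the classical mutation composes $x_m\to x_k\to y_{ij}$ along the \emph{incoming} arrows $x_m\to x_k$. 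This discrepancy is not cosmetic: it is why the section is titled ``mutations composed with monomial transforms,'' and it is consistent with Remark~\ref{SqMutLem} --- if $\widetilde\mu_k$ were literally $\mathcal{I}\circ\mu_k\circ\mathcal{I}$ with $\mathcal{I}$ an involution, then $\widetilde\mu_k$ would be an involution, which the paper explicitly says it is not. A complete proof has to insert the monomial transform $T_k$ (equivalently, renormalize the mutated $x$-variable by the unit $1+\xi_i\xi_j$ for each reversed $2$-path through $x_k$) and check that this renormalization simultaneously repairs the exchange relation and converts the classical arrow-creation rule into rule (1*). Your proposal, having assumed the problematic arrows away, never confronts the only case in which the statement is not immediate.
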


\begin{proof}
Straightforward from~(\ref{IdEq}).
\end{proof}

\section{the main example: superfriezes}\label{SFS}

Frieze patterns were invented by Coxeter~\cite{Cox}.
This notion provides surprising relations between classical continued fractions,
projective geometry (cross-ratios) and quiver representations.
Coxeter's friezes are also related to linear difference equations and
the classical moduli spaces $\mathcal{M}_{0,n}$ of configurations of points,
see~\cite{SVRS}.
The set of Coxeter's friezes is an algebraic variety
that has a structure of cluster varieties,
associated to the Dynkin quivers $A_n$.
For a survey, see~\cite{Mor}.

The notion of {\it superfrieze} was introduced in~\cite{MOT}
as generalization of Coxeter's frieze patterns.
The collection of all superfriezes is an algebraic supervariety
isomorphic to the supervariety of supersymmetric Hill's 
(or one-dimensional Schr\"odinger) equations~\cite{SVRS}
with some particular monodromy condition.

In this section we describe how the extended quiver~$\widetilde{\Qc}$ with the underlying quiver~$A_n$, 
correspondis to a superfrieze.

\subsection{Supersymmetric discrete Schr\"odinger equation}
Consider two infinite sequences of elements of some supercommutative ring $\Rc$:
$$
(a_i),
\quad
(\b_i),
\quad
i\in\Z,
$$
where $a_i\in\Rc_0$ and $\b_i\in\Rc_1$.

The following equation with indeterminate $(V_i,W_i)_{i\in\Z}$:
\begin{equation}
\label{SeQE}
\left(
\begin{array}{l}
V_{i-1}\\[4pt]
V_i\\[4pt]
W_i
\end{array}
\right)=
A_i\left(
\begin{array}{l}
V_{i-2}\\[4pt]
V_{i-1}\\[4pt]
W_{i-1}
\end{array}
\right),
\qquad
\hbox{where}
\qquad
A_i=
\left(
\begin{array}{cc|c}
0&1&0\\[4pt]
-1&a_i&-\b_i\\[4pt]
\hline
0&\b_i&1
\end{array}
\right),
\end{equation}
is the supersymmetric version of discrete Schr\"odinger equation, see~\cite{MOT}.
Note that the matrix $A_i$ belongs to the supergroup $\OSp(1|2)$.

We assume that the coefficients $a_i,\b_i$ are 
(anti)periodic with some period $n$:
$$
a_{i+n}=a_i,
\qquad
\b_{i+n}=-b_i,
$$
for all $i\in\Z$.
Under this assumption, there is a notion of {\it monodromy},
i.e., an element $M\in\OSp(1|2)$, such that periodicity properties of
the solutions of~(\ref{SeQE}) are described by $M$.

Supersymmetric discrete Schr\"odinger equations
with fixed monodromy matrix:
\begin{equation}
\label{MoQE}
M=\left(
\begin{array}{rr|c}
-1&0&\;\;0\\[4pt]
0&-1&\;\;0\\[4pt]
\hline
0&0&\;\;1
\end{array}
\right).
\end{equation}
were considered in~\cite{MOT}.
This is an algebraic supervariety of dimension $n|(n+1)$
which is a version of super moduli space $\mathfrak{M}_{0,n}$, see~\cite{Wit}.
The notion of superfrieze allows one to define special coordinates on this supervariety.

\subsection{The definition of a superfrieze and the corresponding superalgebra}\label{TheDef}

Similarly to the case of classical Coxeter's friezes, a superfrieze
is a horizontally-infinite array bounded by rows of $0$'s and $1$'s.
Even and odd elements alternate and form ``elementary diamonds'';
there are twice more odd elements.

\begin{defn}
A superfrieze, or a supersymmetric frieze pattern, is the following array
$$
\begin{array}{ccccccccccccccccccccccccc}
&\ldots&0&&&&0&&&&0\\[10pt]
\ldots&{\color{red}0}&&{\color{red}0}&&{\color{red}0}
&&{\color{red}0}&&{\color{red}0}&&\ldots\\[10pt]
\;\;\;1&&&&1&&&&1&&&\ldots\\[10pt]
&{\color{red}\varphi_{0,0}}&&{\color{red}\varphi_{\frac{1}{2},\frac{1}{2}}}&&{\color{red}\varphi_{1,1}}
&&{\color{red}\varphi_{\frac{3}{2},\frac{3}{2}}}&&{\color{red}\varphi_{2,2}}&&\ldots\\[12pt]
&&f_{0,0}&&&&f_{1,1}&&&&f_{2,2}\\[10pt]
&{\color{red}\varphi_{-\frac{1}{2},\frac{1}{2}}}&&{\color{red}\varphi_{0,1}}
&&{\color{red}\varphi_{\frac{1}{2},\frac{3}{2}}}
&&{\color{red}\varphi_{1,2}}&&{\color{red}\varphi_{\frac{3}{2},\frac{5}{2}}}&&\ldots\\[10pt]
f_{-1,0}&&&&f_{0,1}&&&&f_{1,2}&&\\[4pt]
&\iddots&&\iddots&& \ddots&&\ddots&& \ddots&&\!\!\!\ddots\\[4pt]
&&f_{2-m,1}&&&&f_{0,m-1}&&&&f_{1,m}&&&&\\[10pt]
\ldots&{\color{red}\varphi_{\frac{3}{2}-m,\frac{3}{2}}}&&{\color{red}\varphi_{2-m,2}}&&\ldots
&&{\color{red}\varphi_{0,m}}&&{\color{red}\varphi_{\frac{1}{2},m+\frac{1}{2}}}&&{\color{red}\varphi_{1,m+1}}\\[10pt]
\;\;\;1&&&&1&&&&1&&&&&\\[10pt]
\ldots&{\color{red}0}&&{\color{red}0}&&{\color{red}0}
&&{\color{red}0}&&{\color{red}0}&&{\color{red}0}&\\[10pt]
&\ldots&0&&&&0&&&&0&\ldots
\end{array}
$$
where $f_{i,j}\in\Rc_0$ and $\varphi_{i,j}\in\Rc_1$, and where every 
{\it elementary diamond}:
$$
\begin{array}{ccccc}
&&B&&\\[4pt]
&{\color{red}\Xi}&&{\color{red}\Psi}&\\[4pt]
A&&&&D\\[4pt]
&{\color{red}\Phi}&&{\color{red}\Sigma}&\\[4pt]
&&C&&
\end{array}
$$
satisfies the following conditions:
\begin{equation}
\label{Rule}
\begin{array}{rcl}
AD-BC&=&1+\Sigma\Xi,\\[4pt]
B\Phi-A\Psi&=&\Xi,\\[4pt]
B\Sigma-D\Xi&=&\Psi,
\end{array}
\end{equation}
that we call the {\it frieze rule}.

The integer $m$, i.e., the number of even rows between the rows
of $1$'s is called the {\it width} of the superfrieze.
\end{defn}

The last two equations of~(\ref{Rule}) are equivalent to
$$
A\Sigma-C\Xi=\Phi,
\qquad
D\Phi-C\Psi=\Sigma.
$$
Note also that these equations also imply $\Xi\Sigma=\Phi\Psi$,
so that the first equation can also be written as follows: 
$AD-BC=1-\Phi\Psi$.

One can associate an elementary diamond with every element of $\OSp(1|2)$
using the following formula:
$$
\left(
\begin{array}{cc|c}
a&b&\g\\[4pt]
c&d&\d\\[4pt]
\hline
\a&\b&e
\end{array}
\right)
\qquad
\longleftrightarrow
\qquad
\begin{array}{ccccc}
&&\!\!\!-a&&\\[4pt]
&{\color{red}\g}&&{\color{red}\a}&\\[4pt]
b&&&&\!\!\!\!-c\\[4pt]
&\!\!\!{\color{red}-\b}&&{\color{red}\d}&\\[4pt]
&&d&&
\end{array}
$$
so that the relations~(\ref{OSpRel}) and~(\ref{Rule}) coincide.

Consider also the configuration:
$$
\begin{array}{ccccccc}
&&{\color{red}\widetilde{\Psi}}&&{\color{red}\widetilde{\Xi}}\\[4pt]
&&&B&&\\[4pt]
{\color{red}\widetilde{\Phi}}&&{\color{red}\Xi}&&{\color{red}\Psi}&&{\color{red}\widetilde{\Sigma}}\\[4pt]
&A&&&&D\\[4pt]
&&{\color{red}\Phi}&&{\color{red}\Sigma}&\\[4pt]
&&&C&&
\end{array}
$$
The frieze rule~\eqref{Rule} then implies
$$
B\,(\Phi-\widetilde{\Phi})=
A\,(\Psi-\widetilde{\Psi}),
\qquad
B\,(\Sigma-\widetilde{\Sigma})=
D\,(\Xi-\widetilde{\Xi}).
$$

\begin{defn}
The supercommutative superalgebra generated by
all the entries of a superfrieze will
be called the {\it algebra of a superfrieze}.
\end{defn}

\subsection{Examples: superfriezes of width $1$ and $2$}

The most general superfrieze of width $m=1$ is of the following form:
$$
\begin{array}{ccccccccccccccccccccccc}
&&0&&&&0&&&&\!\!0&&&&0\\[8pt]
&{\color{red}0}&&{\color{red}0}&&{\color{red}0}&&
\!\!\!{\color{red}0}&&{\color{red}0}&&{\color{red}0}&&\;\;\;{\color{red}0}&&\!\!\!{\color{red}0}\\[8pt]
1&&&&1&&&&1&&&&\!\!1&&&&1\\[8pt]
&{\color{red}\xi}&&{\color{red}\xi}&&{\color{red}\xi'}
&&{\color{red}\xi'}&&{\color{red}\xi-x\eta}&&{\color{red}\xi-x\eta}
&&{\color{red}\eta}&&{\color{red}\eta}\\[10pt]
&&x&&&&x'&&&&\!\!\!x&&&&x'\\[10pt]
&{\color{red}\xi-x\eta}&&\;\;\;{\color{red}x\eta-\xi}&&\;{\color{red}\eta}&&{\color{red}-\eta}
&&{\color{red}-\xi}&&{\color{red}\xi}&&{\color{red}-\xi'}&&{\color{red}\xi'}\\[8pt]
1&&&&1&&&&1&&&&\!\!1&&&&1\\[8pt]
&{\color{red}0}&&{\color{red}0}&&\;\;{\color{red}0}
&&\!{\color{red}0}&&{\color{red}0}&&{\color{red}0}&&\;\;\;{\color{red}0}&&\!\!\!{\color{red}0}\\[10pt]
&&0&&&&0&&&&\!\!0&&&&0\
\end{array}
$$
where
$$
x'=\frac{2}{x}+\frac{\eta\xi}{x},
\qquad 
\xi'=\eta-\frac{2\xi}{x}.
$$
One can chose local coordinates $(x,\xi,\eta)$
to parametrize the supervariety of superfriezes.

The next example is a superanalog of
so-called Gauss' ``Pentagramma mirificum'':
$$
\begin{array}{cccccccccccccccccccccc}
0&&&&\!\!\!0&&&&\!\!0&&&&\!\!\!0&&&&0&&\ldots\\[10pt]
&{\color{red}0}&&\!\!{\color{red}0}&&{\color{red}0}
&&{\color{red}0}&&{\color{red}0}&&{\color{red}0}&&\!\!{\color{red}0}&&{\color{red}0}&&{\color{red}0}\\[10pt]
\ldots&&1&&&&1&&&&1&&&&\!\!1&&&&\!\!1\\[4pt]
&{\color{red}\xi^*}
&&\!\!{\color{red}\xi}&&{\color{red}\xi}&&{\color{red}\xi'}&&{\color{red}\xi'}
&&{\color{red}\raisebox{.5pt}{\textcircled{\raisebox{-.9pt} {1}}}}
&&{\color{red}\raisebox{.5pt}{\textcircled{\raisebox{-.9pt} {1}}}}
&&\!\!{\color{red}\zeta^*}&&{\color{red}\zeta^*}
\\[10pt]
y'&&&&
\!\!x&&&&\!\!x'
&&&&x''&&&&y\\[10pt]
&\!\!\!{\color{red}-\eta'}&&
\!\!\!{\color{red}\eta^*}
&&{\color{red}\raisebox{.5pt}{\textcircled{\raisebox{-.9pt} {2}}}}
&&{\color{red}\eta}
&&{\color{red}\raisebox{.5pt}{\textcircled{\raisebox{-.9pt} {2}}}'}
&&{\color{red}\eta'}
&&\!\!\!{\color{red}\eta^*}&&\!\!{\color{red}-\eta}
&&{\color{red}\eta}\\[10pt]
&&x''&&&&y
&&&&y'&&&&x
&&&&\!\!\!x'\\[10pt]
&{\color{red}\raisebox{.5pt}{\textcircled{\raisebox{-.9pt} {1}}}}
&&{\color{red}-\raisebox{.5pt}{\textcircled{\raisebox{-.9pt} {1}}}}
&&{\color{red}\zeta^*}
&&{\color{red}-\zeta^*}
&&{\color{red}\zeta}
&&\!\!{\color{red}-\zeta}&&\!\!\!{\color{red}\zeta'}
&&{\color{red}-\zeta'}&&{\color{red}-\xi'}\\[10pt]
1&&&&\!\!1&&&&1&&&&1&&&&1&&\ldots\\[10pt]
&{\color{red}0}&&\!\!{\color{red}0}&&{\color{red}0}&&{\color{red}0}&&{\color{red}0}
&&{\color{red}0}&&\!\!{\color{red}0}&&{\color{red}0}&&{\color{red}0}\\[10pt]
\ldots&&0&&&&0&&&&0&&&&0&&&&\!\!0
\end{array}
$$
The frieze is defined by the initial values
$(x,y,\xi,\eta,\zeta)$, the next values are easily calculated using the frieze rule:
$$
x'=\frac{1+y}{x}+\frac{\eta\xi}{x},
\qquad
y'=\frac{1+x+y}{xy}+\frac{\eta\xi}{xy}+\frac{\zeta\eta}{y}.
$$
One then calculates:
$$
x''=\frac{1+y'}{x'}+\frac{\eta'\xi'}{x'}
=\frac{1+x}{y}+\frac{\eta\xi}{y}+\xi\zeta+\frac{x}{y}\zeta\eta.
$$
All these Laurent polynomials
can be obtained as mutations of the
initial coordinates $(x,y,\xi,\eta,\zeta)$ and the initial quiver
$$
 \xymatrix{
{\color{red}\xi}\ar@{<-}[d]\ar@{->}[rrd]&
{\color{red}\eta}\ar@{->}[ld]\ar@{<-}[rd]&
{\color{red}\zeta}\ar@{->}[d]\\
x\ar@{->}[rr]&&y
}
$$

For the odd coordinates, one has:
$$
\xi'=\eta-x'\xi=\eta-\frac{1+y}{x}\xi,
\qquad
\eta'=\zeta-y'\xi=\zeta-\frac{1+x+y}{xy}\xi-\frac{\xi\eta\zeta}{y},
\qquad
\zeta'=-\xi
$$
On the other side of the initial diagonal,
$$
\zeta^*=\eta-y\zeta,
\qquad
\eta^*=\xi-x\zeta,
\qquad
\xi^*=-\zeta.
$$
Furthermore,
$$
\raisebox{.5pt}{\textcircled{\raisebox{-.9pt} {1}}}=
\frac{(1+x)}{y}\eta-\xi-\zeta,
\qquad
\raisebox{.5pt}{\textcircled{\raisebox{-.9pt} {2}}}=
x\eta-y\xi,
$$
and finally:
$$
\raisebox{.5pt}{\textcircled{\raisebox{-.9pt} {2}}}'=
x'\zeta-y'\eta
=\frac{1+y}{x}\zeta-\frac{1+x+y}{xy}\eta-\frac{\xi\eta\zeta}{x}.
$$

\subsection{Properties of superfriezes}

The main properties of superfriezes are similar to those of the classical Coxeter friezes, see~\cite{MOT}.

(a)
The property of {\it glide symmetry} reads:
$$
f_{i,j}=f_{j-m-1,i-2},\qquad
\varphi_{i,j}=\varphi_{j-m-\frac{3}{2},i-\frac32},\qquad
\varphi_{i+\frac12,j+\frac12}=-\varphi_{j-m-1,i-1}.
$$
This implies, in particular, the following (anti)periodicity:
$$
\varphi_{i+n,j+n}=-\varphi_{i,j},
\qquad
f_{i+n,j+n}=f_{i,j},
$$ 
for all $i,j\in\Z$.

(b)
The Laurent phenomenon:
entries of a superfrieze are Laurent polynomials in 
the entries from any of its diagonals.

(c)
The collection of all superfriezes of width $n$ is an algebraic supervariety
of superdimension~$n|(n+1)$.
It is isomorphic to the supervariety of
Schr\"odinger equations~(\ref{SeQE}) with monodromy condition~(\ref{MoQE}).
The relation to difference equations is as follows.
The entries of the South-East diagonal of every superfrieze are solutions to
the discrete Schr\"odinger equation~(\ref{SeQE}).

\subsection{Superfriezes viewed as cluster superalgebras}

Let us now describe the cluster structure of the supervariety of superfriezes.
Consider the following quiver with $m$ even and $m+1$ odd vertices:
\begin{equation}
\label{AQuiv}
 \xymatrix{
{\color{red}\xi_1}\ar@{<-}[rd]\ar@{->}[rrrd]&&
{\color{red}\xi_2}\ar@{->}[ld]\ar@{<-}[rd]\ar@{->}[rrrd]&&
{\color{red}\xi_3}\ar@<-3pt>@{->}[ld]\ar@{<-}[rd]
&\cdots&
{\color{red}\xi_{m}}&&
{\color{red}\xi_{m+1}}\ar@{->}[ld]\\
& x_1\ar@{->}[rr]&&
x_2\ar@{->}[rr]
&&x_3
&\cdots&\ar@{->}[lu]x_m
}
\end{equation}
and the corresponding algebra.

\begin{thm}
\label{ClFrProp}
The algebra of a superfrieze of width $m$ 
is a subalgebra of the algebra corresponding to the above quiver.
\end{thm}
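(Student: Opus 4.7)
The plan is to identify the initial cluster $(x_1,\ldots,x_m,\xi_1,\ldots,\xi_{m+1})$ associated to the quiver~\eqref{AQuiv} with the entries along a fixed diagonal (say the NW--SE diagonal starting just below the upper row of $1$'s) of a generic superfrieze of width~$m$, and then to show by induction that the entries along each successive diagonal are produced by a standard sequence of mutations $\widetilde\mu_1,\widetilde\mu_2,\ldots,\widetilde\mu_m$. First I would check the local correspondence: at the vertex $x_k$ the $2$-paths of~\eqref{AQuiv} are exactly $\xi_{k-1}\to x_k\to\xi_k$ and $\xi_{k+1}\to x_k\to\xi_k$ (with obvious adjustments for the two endpoints). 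With this data, the exchange relation~\eqref{Mute} reads
\[
x_kx_k'=x_{k-1}x_{k+1}+(1+\xi_{k-1}\xi_k)(1+\xi_{k+1}\xi_k),
\]
which, after expanding and using $\xi_k^2=0$, matches the frieze rule $AD-BC=1+\Sigma\Xi$ applied to the elementary diamond whose top corner is $x_k$ and whose odd neighbours are the appropriate $\xi$'s (boundary $1$'s and $0$'s replace the missing factors at the extreme positions).

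Next I would handle the odd entries. The frieze rule gives
$\Psi=B\Sigma-D\Xi$ and $\Phi=A\Sigma-C\Xi$,
so each new odd entry $\varphi_{i,j}$ is an explicit $\Z$-linear combination of the initial odd variables $\xi_1,\ldots,\xi_{m+1}$ whose coefficients are Laurent polynomials in the even cluster variables already produced. In particular every $\varphi_{i,j}$ lies in $A(\widetilde{\Qc})$ as soon as the corresponding even neighbours do. Together with the even step above, this gives, by induction on the diagonal, the inclusion of the first period of the frieze into $A(\widetilde{\Qc})$. The glide symmetry and $n$-(anti)periodicity of superfriezes (recalled in part (a) of the Properties subsection) then cover the entire infinite array.

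To avoid re-proving Laurent-style statements by hand, I would actually carry out the induction after applying the reduction of Section~\ref{MTSect}: under the identification $\mathcal I$ the extended quiver~\eqref{AQuiv} becomes a colored quiver of type $A$ with extra frozen vertices $y_{ij}=1+\xi_i\xi_j$, and by Proposition~\ref{RedProp} mutations $\widetilde\mu_k$ are ordinary mutations $\mu_k$ followed by a monomial transform. The underlying classical $A_m$ cluster algebra is already known to coincide with the coordinate ring of Coxeter's frieze pattern of width $m$, and the monomial transform contributes precisely the $(1+\xi_i\xi_j)$ corrections that turn Coxeter's rule into the frieze rule~\eqref{Rule}. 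So the body of the proof becomes a compatibility check between the evolution of the colored quiver under the $\mu_k$'s and the combinatorics of the diamonds in the superfrieze.

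The main obstacle, and the only part that requires genuine work, is the bookkeeping in step~2: one must verify that after the sequence of mutations producing a new diagonal, the induced $2$-path pattern on $\widetilde\mu_m\cdots\widetilde\mu_1(\widetilde{\Qc})$ (equivalently, the incidence pattern on the $y_{ij}$'s in the colored quiver) is again of the form~\eqref{AQuiv} up to relabeling of the odd vertices. This is what guarantees that the induction step can be iterated, and it is exactly the place where the creation rule~(1*) and the cancellation rule~(3*) of Definition~\ref{MutDef} have to cooperate to give back the ``$A_m$-with-nearest-neighbour-$\xi$'' structure. Once this periodicity of the quiver under the sweep $\widetilde\mu_m\cdots\widetilde\mu_1$ is established, the rest of the proof is the routine comparison of~\eqref{Mute} with~\eqref{Rule} sketched above.
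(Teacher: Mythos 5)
Your overall strategy is the paper's: take $(x_1,\ldots,x_m,\xi_1,\ldots,\xi_{m+1})$ to be a diagonal of the frieze, show the next diagonal is produced by the sweep $\widetilde\mu_m\cdots\widetilde\mu_1$, get the odd entries from the linear relations in~\eqref{Rule} (the paper uses $\xi'_1=\xi_2-x'_1\xi_1$ and the recurrence $\xi'_k-\xi'_{k-1}=-\xi_1x'_k$ from Lemma~2.5.3 of~\cite{MOT}), and then iterate. But there is a concrete error in your ``local correspondence'' step, and the part you defer as ``the main obstacle'' is precisely where the error gets repaired, so the gap is genuine. In the initial quiver~\eqref{AQuiv} the even arrows at $x_k$ are $x_{k-1}\to x_k\to x_{k+1}$, so~\eqref{Mute} gives
\[
x_kx_k'=x_{k+1}+(1+\xi_{k-1}\xi_k)(1+\xi_{k+1}\xi_k)\,x_{k-1}
       =x_{k+1}+\bigl(1+\xi_{k-1}\xi_k+\xi_{k+1}\xi_k\bigr)x_{k-1},
\]
not the relation you display, and in any case this does not match the frieze rule: the diamond containing $x'_k$ gives $x_kx'_k=1+x_{k+1}x'_{k-1}+\xi_{k+1}\xi_k$ (the paper's~\eqref{LFor}), with the \emph{primed} $x'_{k-1}$ from the new diagonal and a \emph{single} odd correction term. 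The two expressions agree only for $k=1$.

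The match for $k>1$ holds only for the quiver obtained after mutating at $x_1,\ldots,x_{k-1}$, and verifying this is the actual content of the proof. After those mutations the even arrows at $x_k$ both point outward, to $x'_{k-1}$ and to $x_{k+1}$, so the first summand of~\eqref{Mute} becomes $x'_{k-1}x_{k+1}$ and the second summand has empty even product; and, crucially, rule~(1*) applied at $x_{k-1}$ creates the $2$-path $\xi_k\to x_k\to\xi_{k-1}$, which cancels the original $\xi_{k-1}\to x_k\to\xi_k$ by rule~(3*), leaving only $\xi_{k+1}\to x_k\to\xi_k$ and hence the single factor $1+\xi_{k+1}\xi_k$. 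This is exactly the interaction of rules~(1*) and~(3*) that you flag but do not carry out; without it your first-paragraph ``routine comparison'' is false as stated, and the claim that the quiver returns to the form~\eqref{AQuiv} after a full sweep (needed to iterate over diagonals) is also unestablished. The paper settles both points by exhibiting the intermediate quiver after $k-1$ mutations and the final quiver after $m$ mutations explicitly; you should do the same (a short induction on $k$ tracking arrows and $2$-paths suffices). Your reduction via Section~\ref{MTSect} and Proposition~\ref{RedProp} is a legitimate alternative framing, but it relocates rather than removes this bookkeeping.
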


\begin{proof}
Choose the following entries of the superfrieze on parallel diagonals:
$$
 \begin{array}{cccccccccccccc}
1&&&&1\\[4pt]
&{\color{red}*}&&{\color{red}\xi_1}&&{\color{red}*}&&{\color{red}\xi'_1}\\[4pt]
&&x_1&&&&x'_1\\[4pt]
&&&{\color{red}*}&&{\color{red}\xi_2}&&{\color{red}*}&&{\color{red}\xi'_2}\\[4pt]
&&&&x_2&&{\color{red}\ddots}&&x'_2&&\!\!{\color{red}\ddots}\\[4pt]
&&&&&\ddots&&{\color{red}\xi_m}&&\ddots&&{\color{red}\xi'_m}\\[4pt]
&&&&&&x_m&&&&x'_m\\[4pt]
&&&&&&&{\color{red}*}&&{\color{red}\xi_{m+1}}&&{\color{red}*}&&{\color{red}\xi'_{m+1}}\\[4pt]
&&&&&&&&1&&&&1
\end{array}
$$
The entries $\{x_1,\ldots,x_m,\xi_1,\ldots,\xi_{m+1}\}$ determine all other entries of the superfrieze, 
and can be taken for initial coordinates.
Our goal is to calculate the entries $\{x'_1,\ldots,x'_m,\xi'_1,\ldots,\xi'_{m+1}\}$
and show that these entries 
also belong to the algebra $A(\widetilde\Qc)$ of the quiver~\eqref{AQuiv}.

Using the frieze rule~\eqref{Rule}, one obtains the following recurrent formula:
\begin{equation}
\label{LFor}
x_kx'_k=1+x_{k+1}x'_{k-1}+\xi_{k+1}\xi_k.
\end{equation}
On the other hand, let us perform consecutive mutations at vertices 
$x_1$, and then at $x_2,x_3\ldots,x_m$
of the quiver~\eqref{AQuiv}.
After the $(k-1)$st step, one obtains the following quiver:
$$
 \xymatrix
  @!0 @R=1.3cm @C=1.3cm
  {
{\color{red}\xi_1}\ar@{->}[rd]&&
{\color{red}\xi_2}\ar@{<-}[ld]\ar@{->}[rd]&&
{\color{red}\xi_3}\ar@<-1pt>@{<-}[ld]\ar@{->}[llld]
&\cdots&
{\color{red}\xi_{k}}\ar@{<-}[ld]\ar@{->}[rrrd]&&
{\color{red}\xi_{k+1}}\ar@{->}[ld]\ar@{<-}[rd]&\cdots\\
& x'_1\ar@{->}[rr]&&
x'_2
&\cdots&x'_{k-1}&&
\ar@{->}[lu]x_k\ar@{->}[rr]\ar@{->}[ll]&&x_{k+1}&\cdots
}
$$
Therefore, the mutation at $x_k$ is allowed,
and the exchange relation for $x_k$ is exactly the same as the
recurrent formula \eqref{LFor} for $x'_k$.
We have proved that the values of the entries $\{x'_1,\ldots,x'_m\}$ in the frieze coincide
with the coordinates $\{x'_1,\ldots,x'_m\}$ of the quiver~\eqref{AQuiv} after 
the iteration of even mutations.

Note that after $m$ consecutive mutations at even vertices, the quiver~\eqref{AQuiv}
becomes as follows:
$$
 \xymatrix
   @!0 @R=1.3cm @C=1.3cm
 {
{\color{red}\xi_1}\ar@{->}[rd]&&
{\color{red}\xi_2}\ar@{<-}[ld]\ar@{->}[rd]&&
{\color{red}\xi_3}\ar@<-1pt>@{<-}[ld]\ar@{->}[rd]\ar@{->}[llld]
&&{\color{red}\xi_3}\ar@<-1pt>@{<-}[ld]\ar@{->}[llld]&\cdots&
{\color{red}\xi_{m}}\ar@{<-}[ld]&&
{\color{red}\xi_{m+1}}\ar@{<-}[ld]\ar@{->}[llld]\\
& x'_1\ar@{->}[rr]&&
x'_2\ar@{->}[rr]
&&x'_3
&\cdots&x'_{m-1}\ar@{->}[rr]&&\ar@{<-}[lu]x'_m
}
$$

Consider now the odd entries of the superfrieze $\{\xi'_1,\ldots,\xi'_{m+1}\}$,
and let us proceed by induction.

For the first of the odd entries, one has:
$$
\xi'_1=\xi_2-x'_1\xi_1.
$$
Indeed, the frieze rule implies that the entry between 
$\xi_1$ and $\xi'_1$ (previously denoted by $*$) is also equal to $\xi'_1$,
i.e., we have the following fragment of the superfrieze:
$$
\begin{array}{cccccccccc}
&&&&1&&&&1\\[4pt]
&&&{\color{red}\xi_1}&&{\color{red}\xi'_1}&&{\color{red}\xi'_1}\\[4pt]
&&x_1&&&&x'_1\\[4pt]
&&&{\color{red}*}&&{\color{red}\xi_2}
\end{array}
$$
The above expression for $\xi'_1$ is just the third equality in~\eqref{Rule}.
It follows that $\xi'_1$ belongs to the algebra $A(\widetilde\Qc)$.

It was proved in~\cite{MOT} that the entries on the diagonals
of the superfrieze satisfy recurrence equations with coefficients
standing in the first two rows.
In particular, Lemma 2.5.3 of~\cite{MOT} implies the
following recurrence for the odd entries of the superfrieze:
$$
\xi'_k-\xi'_{k-1}=
-\xi_1x'_k,
\qquad
\hbox{for all}
\quad
k.
$$
One concludes, by induction on $k$,
that all of the entries $\{\xi'_1,\ldots,\xi'_{m+1}\}$ 
belong to the algebra $A(\widetilde\Qc)$.
Again, using the induction one arrives at the same conclusion for all
parallel diagonals.

Finally, one proves in a similar way
that the entries in-between, denoted by $*$,
also belong to the algebra $A(\widetilde\Qc)$.
\end{proof}

\bigskip
{\bf Acknowledgements}.
We are grateful to Sophie Morier-Genoud,
Gregg Musiker and Sergei Tabachnikov 
for a number of fruitful discussions.

\end{document}